\newcommand{\RR}{\mathbb R}
\newcommand{\NN}{\mathbb N}
\newcommand{\ZZ}{\mathbb Z}
\newcommand{\EE}{\mathcal{E}}
\newtheorem{theorem}{Theorem}
\newtheorem{lemma}{Lemma}
\newtheorem{remark}{Remark}
\newtheorem{definition}{Definition}
\begin{document}

\begin{center}
\LARGE
\textbf{Fixed Point Theorems in M-distance Spaces}

\small 
\textbf {Vladyslav Babenko, Vira Babenko, Oleg Kovalenko}\\[6pt]
Department of Mathematical Analysis and Theory of Functions, Oles Honchar Dnipro National University, Dnipro, Ukraine, babenko.vladislav@gmail.com \\

Department of Mathematics and Computer Science, Drake University, Des Moines, USA, vira.babenko@drake.edu \\

Department of Mathematical Analysis and Theory of Functions, Oles Honchar Dnipro National University, Dnipro, Ukraine,  olegkovalenko90@gmail.com
\end{center}

\begin{abstract}
We prove fixed point theorems in a space with a distance function that takes values in a partially ordered monoid. On the one hand, such an approach allows one to generalize some fixed point theorems in a broad class of spaces, including metric and uniform spaces. On the other hand, compared to the so-called cone metric spaces and $K$-metric spaces, we do not require that the distance function range has a linear structure. We also consider several applications of the obtained fixed point theorems. In particular, we consider the questions of the existence of solutions of the Fredholm integral equation in $L$-spaces. \vskip 2mm

\textbf{Keywords:} Fixed point theorem, distance space, monoid-valued distance.
\end{abstract}

\section{Introduction}
The theory of fixed point theorems is a well developed domain of Analysis and Topology (see books~\cite{Granas,Agarwal,Kirk} and references therein). It has a numerous amount of applications, in particular in  Numerical Analysis, the Theory of ODEs and PDEs, Integral Equations, Mathematical Economics and others. 

One of the most well known fixed point theorems is the contraction mapping theorem in metric spaces, which goes back to Picard, Banach and Caccioppoli. The contraction mapping principle was generalized in various directions, see, e.g. mentioned above books and surveys~\cite{Zabrejko, Jancovic}.
 A majority of the contraction mapping theorems use the following scheme of the proof. Consider an appropriate sequence 
\begin{equation}\label{sequence}
x_0,x_1=f(x_0),\ldots,x_n=f(x_{n-1}),\ldots
\end{equation}
and establish that it is a Cauchy sequence. Using completeness of the space, obtain a point $x$, such that $x_n\to x$, as $n\to\infty.$ Continuity (in some sense) of $f$ gives $f(x)=x$. 

This scheme can be implemented not only in a metric space, but also in a space with agreeing notions of convergent and Cauchy sequences, and that satisfies some properties that allow us to prove that \eqref{sequence} is a Cauchy sequence. One  example of such spaces are uniform spaces, see e.g.~\cite[Ch. 8]{Engelking}~\cite[Ch. 6]{Kelley}. It seems that the first contraction mapping type result in uniform spaces was obtained in~\cite{Taylor}. Another important class of spaces that satisfies the stated above requirements is the class of so-called distance spaces, or symmetries.
Some results regarding the fixed point theorems in real-valued distance spaces can be found in~\cite{Kirk, Choban, ChobanBerinde} (see also references therein).

We consider distance spaces with distances that take values in a partially ordered monoid, so that there might be no linear structure in the range of the distance function. 

Note, that spaces with metric that takes values in partially ordered vector spaces were introduced by Kurepa~\cite{Kurepa} in~1934. Later Kantorovich developed a theory of normed spaces with norms that take values in complete vector latices, see~\cite{Kantorovich}. These objects appeared to be fruitful in the study of functional equations using iterative methods, and in related questions of Analysis, see~\cite{KVP}. 

 Results on the fixed point theorems for the metric spaces with metric that takes values in a partially ordered vector spaces can be found in the surveys~\cite{Zabrejko, Jancovic}. In particular, in these articles the question whether results on cone metric or $K$-metric spaces can be reduced to results on ordinary metric spaces, is discussed. In~\cite{Radenovic_Kadelburg} fixed point theorems in  cone distance spaces were considered.

{Note, that each uniform space with separating axiom can be metrized with a metric that takes values in a partially ordered monoid, see  Section~\ref{s::applications}. As it is well known, metrization of a uniform space with a real-valued metric is possible only for the uniform spaces with countable base. Questions of metrization of a uniform spaces with $K$-space valued metrics were considered in~\cite{Kusraev}.}

In the case of functions acting in partially ordered spaces, the continuity property of the functions can be substituted by their monotonicity. One of the first fixed point theorems of this kind was proved by Tarski~\cite{tarski}.  For more results in this area we refer the reader to~\cite{guo2004partial,carl2014fixed, Jancovic,Reem} and references therein. A related area of research is the study of multiple fixed points, see~\cite{ ChobanBerinde} and references therein.

The article is organized as follows. 
In Section~\ref{s::MDistanceSpaces} we adduce necessary definitions of a partially ordered monoid $M$, $M$-valued distance functions, the notions of a series and their convergence in $M$. In order to introduce convergence in an $M$-distance space, we define a family $N(M)$ of null sequences in the monoid $M$. We also consider an example of a special family of null sequences $N_{\EE}(M)$ and examples of $M$-distance spaces.

In Section~\ref{s::contractionTheorem} we prove fixed point theorems for the mappings defined on $M$-distance spaces and partially ordered $M$-distance spaces. We also prove a multiple fixed point theorem.

In Section~\ref{s::applications} we consider several applications of the obtained fixed point theorems. In particular, we consider the 
 questions of existence of
solutions of the Fredholm integral equation in $L$-spaces {(i.~e. semi-linear metric spaces with two additional axioms, which connect the metric with the algebraic operations), see~\cite{Vahrameev, Babenko19, Babenko20}. We obtain significantly more general than in~\cite{Babenko19} conditions for existence and uniqueness of a solution of the  Fredholm integral equation.
}

Finally, the Appendix contains proofs of two lemmas, which are not needed for the main results of the article but seem to be illustrative.

\section{M-distance spaces}\label{s::MDistanceSpaces}
\subsection{Partially ordered monoids. Null sequences. Convergent and Cauchy series.}
\begin{definition} (see~\cite[Chapter 1,\S 1]{Lang})
A set $M$ with associative binary operation $+$ is called a monoid, if there exists  $\theta_M\in M$ such that for all $x\in M$
$$
\theta_M+x=x=x+\theta_M.
$$
\end{definition}

\begin{definition}
A set $X$ is called partially ordered if for some pairs of elements $x,y\in X$ the relation $x\le y$  is defined and the following properties are satisfied:
\begin{enumerate}
    \item For all $x\in X$ one has $x\le x$;
    \item If $x\le y$ and $y\le z$, then $x\le z$;
    \item If $x\le y$ and $y\le x$, then $x= y$.
\end{enumerate}
\end{definition}

\begin{definition}
{A monoid $M$ is called a partially ordered monoid, if it is a partially ordered set,} $$M_+:=\{ x\in M\colon \theta_M\le x\}\neq \{\theta_M\},$$ and the following condition holds: 
$$
    \text{If } x_1\leq y_1\, \text{and}\, x_2\leq y_2 \text{ then } x_1+x_2\leq y_1+y_2.
$$
\end{definition}
\begin{definition}
We say that the upper Riesz property holds in a partially ordered space $X$, if for each pair of elements $x,y\in X$  there exists an element $x\vee y\in X$ (which is called the supremum of $x$ and $y$) that satisfies the following properties:
\begin{enumerate}
    \item $x\leq x\vee y$ and $y\leq x\vee y$;
    \item If $z\in X$ is such that $x\leq z$ and $y\leq z$, then $x\vee y\leq z$.
\end{enumerate}
Analogously one can define the infimum $x\wedge y$ and the lower Riesz property.
\end{definition}

\begin{definition}\label{def::nullSequences}
Let $M$ be a partially ordered monoid. Denote by $N(M)$ a family of sequences 
$
\{x_1,\ldots, x_n,\ldots\}\subset M_+
$
such that the following properties hold:
\begin{enumerate}
 \item $\{\theta_M,\ldots, \theta_M,\ldots\}\in N(M)$ and if $x\in M_+$, $\{x,\ldots, x,\ldots\}\in N(M)$, then $x=\theta_M$.
    \item If $\{x_n\}, \{y_n\}\in N(M)$, then $\{x_n+y_n\}\in N(M)$.
    \item If $\{x_n\}\in N(M)$ and $\theta_M\leq y_n\leq x_n$ for all $n\in \NN$, then $\{y_n\}\in N(M)$.
    \item Substitution, addition or removal of a finite number of elements of a sequence from $N(M)$ preserves inclusion into 
    $N(M)$.
     \item Each subsequence $\{x_{n_k}\}$ of a sequence $\{x_n\}\in N(M)$ belongs to $N(M)$.
\end{enumerate}
$N(M)$ will be called a family of null sequences of $M$.
\end{definition}

We also need the notion of a series in a partially ordered monoid.
\begin{definition}
For a sequence $\{x_n\}\subset M_+$ the symbol 
\begin{equation}\label{seriesDef}
 \sum\limits_{k=1}^\infty x_k
\end{equation}
is called a series. The sum $s_n=\sum_{k=1}^nx_k$, $n\in \NN$, is called a partial sum of the series.

We say that series~\eqref{seriesDef} converges, if there exist an element $s\in M_+$ and a null sequence $\{r_n\}$, such that for all $n\in \mathbb{N}$ 
\[
s=s_n+r_n.
\]

We say that series~\eqref{seriesDef} is a Cauchy series, 
if for all increasing sequences of natural numbers $\{n_k\}$ and $\{m_k\}$ such that $m_k\geq n_k$ for all $k\in\NN$, one has
$$
\left\{\sum_{s=n_k}^{m_k} x_s\right\} \in N(M).
$$
\end{definition}
{
\begin{remark}
The notion of a Cauchy series is enough for our purposes. 
However, since in many situations the question of convergence of a series is well studied, we defined the notion of convergent series in a partially ordered monoid as well.
\end{remark}
}

Assume that a partially ordered monoid $M$ {is commutative and}  satisfies the following additional property: if $x,y,z\in M$ are such that $x\neq y$, then  $x+z\neq y+z$.

The difference of the elements  $x$ and $y$  of the monoid $M$ is called the element $z\in M$ such that $x=y+z$, if it exists. The difference of the elements $x$ and $y$ is denoted by $x\ominus y$. The above additional property  implies that the difference of two elements $x$ and $y$ is unique, provided it exists.

From the definition the following property   follows immediately: if the differences  $x\ominus y$ and $y\ominus z$ exist, then the difference $x\ominus z$ exists and 
\begin{equation}\label{hukuharuDiffsSum}
    x\ominus z=(x\ominus y)+(y\ominus z).
\end{equation}

Using the notion of difference, the definition of a convergent series can be restated as follows. Series~\eqref{seriesDef}  converges, if there exists an element $s\in M$, such that for all $n\in \mathbb{N}$ the difference $s\ominus s_n\in M_+$ exists and $\{s\ominus s_n\}\in N(M)$.
Note, that for series~\eqref{seriesDef}, the difference $s_m\ominus s_n$ of its partial sums exists for all $m\geq n$ and is equal to $\sum_{k=n+1}^m x_k$. 

\begin{lemma}
If a sequence $\{x_n\}\subset M_+$ is such that the series
$\sum_{n=1}^\infty x_{n}$ converges, then this series is a Cauchy series.
\end{lemma}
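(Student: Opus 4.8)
The plan is to realize each block sum $\sum_{s=n_k}^{m_k} x_s$ as a difference of the ``remainders'' $r_n:=s\ominus s_n$ produced by convergence, and then to read off membership in $N(M)$ from the closure properties in Definition~\ref{def::nullSequences}.

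First I would fix the data. By the restated form of convergence there is $s\in M$ such that for every $n\ge 0$ (with $s_0:=\theta_M$, so $r_0=s$) the difference $r_n:=s\ominus s_n\in M_+$ exists, and $\{r_n\}_{n\ge 1}\in N(M)$. Applying \eqref{hukuharuDiffsSum} to the chain $s\ge s_m\ge s_n$ and using the already-noted identity $s_m\ominus s_n=\sum_{k=n+1}^m x_k$, I get
\[
r_n=r_m+\sum_{k=n+1}^m x_k,\qquad m\ge n\ge 0,
\]
so that $\sum_{k=n+1}^m x_k=r_n\ominus r_m$ and, since $r_m\in M_+$ and $M$ is a partially ordered monoid, $\theta_M\le\sum_{k=n+1}^m x_k\le r_n$ (the lower bound because it is a finite sum of elements of $M_+$, the upper one by adding $\theta_M\le r_m$ to $\sum_{k=n+1}^m x_k\le\sum_{k=n+1}^m x_k$).

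Next, given increasing sequences $\{n_k\}$, $\{m_k\}$ with $m_k\ge n_k$, set $y_k:=\sum_{s=n_k}^{m_k}x_s=\sum_{s=(n_k-1)+1}^{m_k}x_s$, so by the previous step $\theta_M\le y_k\le r_{n_k-1}$. Since $\{n_k\}$ is increasing, $n_k\to\infty$, so there is $k_0$ with $n_k\ge 2$ for all $k\ge k_0$; for those $k$ the indices $n_k-1$ form a strictly increasing sequence of natural numbers, hence $\{r_{n_k-1}\}_{k\ge k_0}$ is a subsequence of $\{r_n\}_{n\in\NN}$ and therefore lies in $N(M)$ by property (5). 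The sandwich $\theta_M\le y_k\le r_{n_k-1}$ together with property (3) gives $\{y_k\}_{k\ge k_0}\in N(M)$, and since each $y_k\in M_+$, property (4) (restoring the finitely many initial terms) yields $\{y_k\}_{k\in\NN}\in N(M)$. Thus $\sum_{n=1}^\infty x_n$ is a Cauchy series.

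The only genuinely delicate point is the left-endpoint bookkeeping: the block $\sum_{s=n_k}^{m_k}x_s$ starts at $n_k$, not $n_k+1$, so it equals $r_{n_k-1}\ominus r_{m_k}$ rather than $r_{n_k}\ominus r_{m_k}$, and when $n_k=1$ the term $r_0=s$ is not controlled by $N(M)$; this is what forces one to discard the finitely many indices with $n_k=1$ and appeal to closure of $N(M)$ under addition/removal of finitely many terms. Everything else is a routine application of the difference calculus and the defining properties of $N(M)$ — and it is precisely here that the commutativity-plus-cancellation hypothesis on $M$ is used, since it is what makes the differences and the identity \eqref{hukuharuDiffsSum} available.
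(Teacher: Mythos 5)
Your proof is correct and follows essentially the same route as the paper: both bound the block sums $\sum_{s=n_k}^{m_k}x_s$ by the remainders $s\ominus s_n$ coming from the definition of convergence and then invoke the subsequence and sandwich properties of $N(M)$. You are in fact slightly more careful than the paper's terse argument about the left-endpoint index (the block starts at $n_k$, so the relevant remainder is $r_{n_k-1}$, with the finitely many $n_k=1$ terms removed via the finite-modification property), a point the paper glosses over.
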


\begin{proof}

For a convergent series, due to~\eqref{hukuharuDiffsSum}, for all $m \geq n$ one has
$$
s_m\ominus
s_n=(s\ominus s_n)\ominus(s\ominus s_m),
$$
where $s$ is the sum of the series.
For arbitrary increasing sequences of natural numbers $\{n_k\}$ and $\{m_k\}$ such that $m_k\geq n_k$ for all $k\in\NN$,
$$
s_{m_k}\ominus s_{n_k} 
\leq
(s\ominus s_{m_k}) + (s_{m_k}\ominus s_{n_k})
=
s\ominus s_{n_k}\in N(M).
$$
The lemma is proved. 
\end{proof}

\subsection{M-valued distance}
Let $X$ be a set and $M$ be a partially ordered monoid. 
{
\begin{definition}
A mapping $d_{X,M}(\cdot,\cdot)\colon X\times X\to M_+$ is called an $M$-valued distance in $X$, if $d_{X,M}(x,y) = d_{X,M}(y,x)$ for all $x,y\in X$ and 
$$
 d_{X,M}(x,y)=\theta_M\text{ if and only if } x=y.
$$
The pair $(X,d_{X,M})$ is called an $M$-distance space.
\end{definition}
}

\begin{definition}
We say that a sequence $\{ x_n\}\subset X$ converges to $x\in X$, and write $x_n\to x$ as $n\to \infty$, if  $\{d_{X,M}(x_n,x)\}\in N(M)$.
\end{definition}
\begin{definition}
A sequence $\{ x_n\}\subset X$ is called a Cauchy sequence, if for each increasing sequences of natural numbers $\{n_k\}$ and $\{m_k\}$ such that $m_k> n_k$ for all $k\in\NN$,  one has $\left\{d_{X,M}(x_{n_k},x_{m_k})\right\}\in N(M).$
\end{definition}

We need the following generalization of the Fr\'{e}chet-Wilson property (see~\cite[pp. 363-364]{Wilson})
\begin{definition}
An $M$-distance space $(X,d_{X,M})$ is said to satisfy the  Fr\'{e}chet-Wilson property, if for any sequences $\{ x_n\},\{ y_n\},\{ z_n\}\subset X$, 
$$
\{d_{X,M}(x_n,z_{n})+d_{X,M}(z_{n},y_n)\}\in N(M)\implies\{d_{X,M}(x_n,y_{n})\}\in N(M).
$$
\end{definition}

\begin{lemma}
{If an $M$-distance space $(X,d_{X,M})$ satisfies the  Fr\'{e}chet-Wilson property, then } the following properties hold.
\begin{enumerate}
    \item Each converging sequence has a unique limit.
    \item Each subsequence of a converging sequence converges to the same limit.
    \item Each converging sequence is a Cauchy sequence.
\end{enumerate}
\end{lemma}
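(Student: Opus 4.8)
The plan is to prove the three assertions in turn, each time reducing to the closure properties of \(N(M)\) listed in Definition~\ref{def::nullSequences} together with the Fr\'{e}chet--Wilson property; the only technical device needed is to feed \emph{constant} sequences or \emph{subsequences} into the Fr\'{e}chet--Wilson implication.

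For item~(1), I would assume \(x_n\to a\) and \(x_n\to b\), so that \(\{d_{X,M}(x_n,a)\}\in N(M)\) and \(\{d_{X,M}(x_n,b)\}\in N(M)\). By property~2 of Definition~\ref{def::nullSequences} the termwise sum lies in \(N(M)\) as well, and by symmetry of \(d_{X,M}\) it equals \(\{d_{X,M}(a,x_n)+d_{X,M}(x_n,b)\}\). Applying the Fr\'{e}chet--Wilson property to the triple of sequences \(u_n\equiv a\), \(w_n:=x_n\), \(v_n\equiv b\) then gives \(\{d_{X,M}(a,b)\}\in N(M)\); since this is the constant sequence with value \(d_{X,M}(a,b)\in M_+\), property~1 of Definition~\ref{def::nullSequences} forces \(d_{X,M}(a,b)=\theta_M\), i.e. \(a=b\).

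For item~(2), if \(x_n\to x\) then \(\{d_{X,M}(x_n,x)\}\in N(M)\), and for any increasing index sequence \(\{n_k\}\) the sequence \(\{d_{X,M}(x_{n_k},x)\}\) is a subsequence of it, hence lies in \(N(M)\) by property~5 of Definition~\ref{def::nullSequences}; thus \(x_{n_k}\to x\). (This part does not use the Fr\'{e}chet--Wilson property at all.) For item~(3), I would again take \(x_n\to x\) and increasing \(\{n_k\},\{m_k\}\) with \(m_k>n_k\): both \(\{d_{X,M}(x_{n_k},x)\}\) and \(\{d_{X,M}(x_{m_k},x)\}\) are subsequences of \(\{d_{X,M}(x_n,x)\}\in N(M)\), hence in \(N(M)\); their sum is too, and by symmetry it equals \(\{d_{X,M}(x_{n_k},x)+d_{X,M}(x,x_{m_k})\}\). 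Feeding the sequences \((x_{n_k})_k\), \((x)_k\), \((x_{m_k})_k\) into the Fr\'{e}chet--Wilson property yields \(\{d_{X,M}(x_{n_k},x_{m_k})\}\in N(M)\), which is exactly the Cauchy condition.

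I do not expect a genuine obstacle here; the only point deserving care is that the Fr\'{e}chet--Wilson property is phrased for sequences indexed by \(\NN\), so in~(2) and~(3) one must recall that a subsequence (resp. a sequence reindexed by \(k\)) is a legitimate input, and in~(1) that a constant sequence is as well. The substantive ingredients are simply stability of \(N(M)\) under termwise addition (property~2) and under passing to subsequences (property~5), plus the ``no nonzero constant null sequence'' clause (property~1) used to conclude \(d_{X,M}(a,b)=\theta_M\).
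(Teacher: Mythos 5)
Your proof is correct and follows essentially the same route as the paper's: sum the two null sequences and apply the Fréchet--Wilson property with constant sequences for uniqueness, invoke the subsequence stability of $N(M)$ for item~(2), and pass to the subsequences $\{d_{X,M}(x_{n_k},x)\}$, $\{d_{X,M}(x_{m_k},x)\}$ plus Fréchet--Wilson for item~(3). Your write-up is merely more explicit than the paper about which closure properties of $N(M)$ are being used at each step.
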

\begin{proof}
Assume that $x_n\to x$ and $x_n\to y$ as $n\to\infty$. Then $$
\{d_{X,M}(x,x_{n})+d_{X,M}(x_{n},y)\}\in N(M).
$$
Due to the Fr\'{e}chet-Wilson property,  $\{ d_{X,M}(x,y)\}\in N(M)$, hence  $d_{X,M}(x,y) = \theta_M$, and $x = y$.

The second property follows from corresponding property of the family $N(M)$ of null sequences.

Let $\{u_k\}$, $\{v_k\}$ be increasing sequences of natural numbers, $u_k<v_k$ for all $k\in \NN$, and $x_n\to x$ as $n\to\infty$. We have
$$
\{d_{X,M}(x,x_{u_k})+d_{X,M}(x_{v_k},y)\}\in N(M).
$$
Therefore $\{d_{X,M}(x_{u_k},x_{v_k})\}\in N(M)$.
\end{proof}

\begin{definition}
We say that an $M$-distance space $(X,d_{X,M})$ satisfies the strong Fr\'{e}chet-Wilson property, if for arbitrary sequence $\{x_n\}\subset X$,  and increasing sequences of natural numbers $\{n_k\}$ and $\{m_k\}$ such that $m_k> n_k$ for all $k\in\NN$, 
$$
\left\{\sum_{s = n_k}^{m_k-1} d_{X,M}(x_s,x_{s+1})\right\}\in N(M)\implies\left\{ d_{X,M}(x_{n_k},x_{m_k})\right\}\in N(M).
$$
$M$-distance spaces $(X,d_{X,M})$ that satisfy the strong Fr\'{e}chet-Wilson property will be called $FM$-spaces.
\end{definition}
{
\begin{remark}
{ 
It is easy to see, that the strong Fr\'{e}chet-Wilson property implies the Fr\'{e}chet-Wilson property. However, as Example~3 from Section~\ref{s::examples} shows, these properties are not equivalent.
}
\end{remark}
}

The following statement is obvious.

\begin{lemma}\label{lem::connection}
If in an $FM$-distance space a sequence $\{x_n\}\subset X$ is such that 
$$
    \sum_{k = 1}^{\infty} d_{X,M}(x_k,x_{k+1})
$$
is a Cauchy series, then $\{x_n\}$ is a Cauchy sequence.
\end{lemma}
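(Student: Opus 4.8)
The plan is to unwind the two definitions involved and match them directly through the strong Fr\'{e}chet--Wilson property; the whole argument is a short index manipulation, which is why the statement is labelled obvious. Fix arbitrary increasing sequences of natural numbers $\{n_k\}$ and $\{m_k\}$ with $m_k>n_k$ for all $k\in\NN$; the goal is to show that $\{d_{X,M}(x_{n_k},x_{m_k})\}\in N(M)$.

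First I would note that, since $m_k>n_k$ and both are integers, one has $m_k-1\geq n_k$, and the sequence $\{m_k-1\}$ is again increasing. Hence the pair $\{n_k\}$, $\{m_k-1\}$ is admissible in the definition of a Cauchy series. Applying that definition to the Cauchy series $\sum_{k=1}^{\infty} d_{X,M}(x_k,x_{k+1})$ with this pair yields
\[
\left\{\sum_{s=n_k}^{m_k-1} d_{X,M}(x_s,x_{s+1})\right\}\in N(M).
\]
Then I would invoke the strong Fr\'{e}chet--Wilson property of the $FM$-space for the given sequence $\{x_n\}$ together with the pair $\{n_k\}$, $\{m_k\}$: the displayed membership is precisely the hypothesis of that implication, so its conclusion $\{d_{X,M}(x_{n_k},x_{m_k})\}\in N(M)$ follows. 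Since $\{n_k\}$ and $\{m_k\}$ were arbitrary, $\{x_n\}$ is a Cauchy sequence.

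There is essentially no obstacle here; the only point requiring a moment's care is the index bookkeeping, namely using $m_k-1$ rather than $m_k$ as the upper summation index and checking that $m_k-1\geq n_k$. In the degenerate case $m_k-1=n_k$ the inner sum reduces to the single term $d_{X,M}(x_{n_k},x_{n_k+1})=d_{X,M}(x_{n_k},x_{m_k})$, so the conclusion is immediate even without appealing to the strong Fr\'{e}chet--Wilson property; in general, the latter is exactly what bridges the sum of consecutive distances to the distance between the endpoints.
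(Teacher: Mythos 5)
Your proof is correct and is exactly the definition-unwinding argument the paper has in mind: the paper states this lemma as ``obvious'' without giving a proof, and your application of the Cauchy-series definition to the pair $\{n_k\}$, $\{m_k-1\}$ followed by the strong Fr\'{e}chet--Wilson property is the intended route. The index bookkeeping ($m_k-1\geq n_k$, $\{m_k-1\}$ increasing) is handled correctly.
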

\subsection{An example of a family of null sequences. E-convergence}
\begin{definition}\label{def::solidM+}
    Assume that the set  $M_+$ of a partially ordered monoid $M$ contains a non-empty set $\mathcal{E}\subset M_+^0:= M_+\setminus \{ \theta_M \}$ that satisfies the following conditions:
    \begin{enumerate}
        \item If $x\in M_+$ is such that for all $\varepsilon\in \mathcal{E}$ one has $x\le \varepsilon$, then $x=\theta_M$.
    \item For each $\varepsilon\in \mathcal{E}$ there exists $\delta\in \mathcal{E}$ such that $\delta + \delta \leq \varepsilon$.
    \end{enumerate}
Define a family of null sequences $N_{\mathcal{E}}(M)$ as the family of sequences  $\{x_n\}\subset M_+$ with the following property:  
for any $\varepsilon\in \mathcal{E}$ there  exists $N \in \NN$ such that  $x_n < \varepsilon$ for all $n\geq N$.

\end{definition}

The verification of all properties from Definition~\ref{def::nullSequences} can be done analogously to the real limit properties. 

If the family of null sequences is chosen to be $N_{\EE}(M)$, then some notions introduced in previous subsections can be written in a slightly simpler way. Lemma~\ref{l::econvergence} is obvious. Proofs of Lemmas~\ref{l::eCauchySeries} and~\ref{l::eFMCondition}  will be given in Appendix~\ref{a::eNullSeqFamily}.

\begin{lemma}\label{l::econvergence}
If $N(M) = N_{\EE}(M)$, then for a sequence $\{ x_n\}\subset X$ and $x\in X$ one has that $x_n\to x$ {as} $n\to\infty$ iff for any $\varepsilon\in \EE$ there exists $N\in\NN$ such that for $n\geq N$ one has $d_{X,M}(x_n,x)< \varepsilon$.
\end{lemma}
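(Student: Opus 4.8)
The plan is simply to chain together the two definitions involved, so I expect no genuine obstacle. By the definition of convergence, the statement $x_n\to x$ means precisely that $\{d_{X,M}(x_n,x)\}\in N(M)$, and the hypothesis $N(M)=N_{\EE}(M)$ turns this into $\{d_{X,M}(x_n,x)\}\in N_{\EE}(M)$. First I would observe that the sequence $\{d_{X,M}(x_n,x)\}$ is a sequence in $M_+$, because by definition an $M$-valued distance takes its values in $M_+$; this is exactly the hypothesis under which the membership criterion for $N_{\EE}(M)$ in Definition~\ref{def::solidM+} is stated, so that criterion applies to it verbatim. Then, substituting $d_{X,M}(x_n,x)$ for the generic term in that criterion, membership of $\{d_{X,M}(x_n,x)\}$ in $N_{\EE}(M)$ is by definition equivalent to: for every $\varepsilon\in\EE$ there exists $N\in\NN$ such that $d_{X,M}(x_n,x)<\varepsilon$ for all $n\geq N$. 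Composing these equivalences yields the claim.

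Because the whole argument passes through a single biconditional, both implications are obtained at once and no separate treatment of the two directions is needed. The only point worth flagging is that one really does use that the codomain of $d_{X,M}$ is $M_+$ rather than all of $M$, so that the defining property of $N_{\EE}(M)$ is literally applicable to the distance sequence; this is built into the definition of an $M$-valued distance. Hence the lemma is immediate, as already remarked before its statement.
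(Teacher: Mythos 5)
Your proposal is correct and matches the paper's intent exactly: the paper simply declares this lemma obvious, and your argument is precisely the definition-unpacking that justifies that remark (convergence means $\{d_{X,M}(x_n,x)\}\in N(M)=N_{\EE}(M)$, and membership in $N_{\EE}(M)$ is by definition the stated $\varepsilon$--$N$ criterion). Nothing further is needed.
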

\begin{remark}
Such a convergence is called $\EE$-convergence (cf.~\cite{Zabrejko}).
\end{remark}

\begin{lemma}\label{l::eCauchySeries}
If $N(M) = N_{\EE}(M)$, then a series $\sum_{n=1}^\infty x_n$, $\{x_n\}\subset M_+$, is a Cauchy series if and only if for all $\varepsilon\in \EE$ there exists $N\in\NN$ such that for all $m\geq n\geq N$ one has
\begin{equation}\label{eFundamelity}
    \sum_{k=n}^m x_k < \varepsilon.
\end{equation}
\end{lemma}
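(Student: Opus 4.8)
The plan is to prove both implications by unwinding the definition of a Cauchy series under the hypothesis $N(M)=N_{\EE}(M)$, so that everything reduces to an elementary statement about tails of partial sums; in particular, properties~(1)--(2) of $\EE$ from Definition~\ref{def::solidM+} are not needed.

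For the ``if'' direction, I would assume that for every $\varepsilon\in\EE$ there is an $N$ with $\sum_{k=n}^m x_k<\varepsilon$ whenever $m\geq n\geq N$. Take arbitrary increasing sequences $\{n_k\}$ and $\{m_k\}$ with $m_k\geq n_k$ for all $k$; I must check that $\{\sum_{s=n_k}^{m_k}x_s\}\in N_{\EE}(M)$. Fix $\varepsilon\in\EE$ and the corresponding $N$. Since $\{n_k\}$ is increasing we have $n_k\to\infty$, so there is $K$ with $n_k\geq N$ for all $k\geq K$; then also $m_k\geq n_k\geq N$, hence $\sum_{s=n_k}^{m_k}x_s<\varepsilon$ for all $k\geq K$. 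As each such partial sum lies in $M_+$ (being a sum of elements of $M_+$), this is exactly the defining property of $N_{\EE}(M)$, so the series is a Cauchy series.

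For the ``only if'' direction I would argue by contraposition. Suppose~\eqref{eFundamelity} fails uniformly: there is $\varepsilon_0\in\EE$ such that for every $N\in\NN$ there exist $m\geq n\geq N$ with $\sum_{k=n}^m x_k\not<\varepsilon_0$. I would build strictly increasing index sequences recursively: let $n_1,m_1$ be a pair violating the inequality with $N=1$, and, having chosen $m_{k-1}$, apply the failure with $N=m_{k-1}+1$ to obtain $n_k,m_k$ with $m_{k-1}<n_k\leq m_k$ and $\sum_{s=n_k}^{m_k}x_s\not<\varepsilon_0$. The resulting sequences $\{n_k\},\{m_k\}$ are increasing and satisfy $m_k\geq n_k$, yet $\{\sum_{s=n_k}^{m_k}x_s\}$ cannot lie in $N_{\EE}(M)$: for $\varepsilon_0$ there is no index beyond which the terms are $<\varepsilon_0$, since in fact no term is $<\varepsilon_0$. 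Hence the series is not a Cauchy series.

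I do not anticipate a serious obstacle; the only point requiring care is the recursive construction in the contrapositive — ensuring the two index sequences come out strictly increasing (so that the lower index escapes to infinity) while preserving both $m_k\geq n_k$ and the violating inequality at every stage. The remainder is a direct translation between the definition of a Cauchy series and the definition of $N_{\EE}(M)$.
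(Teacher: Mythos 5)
Your proof is correct and follows essentially the same route as the paper: the sufficiency direction uses that $n_k\to\infty$ to make all tail sums eventually $<\varepsilon$, and the necessity direction builds increasing index sequences from the failing pairs to contradict membership in $N_{\EE}(M)$ (your contrapositive formulation, with the explicit choice $N=m_{k-1}+1$ and the careful use of $\not<\varepsilon_0$ rather than $\geq\varepsilon_0$, just makes the paper's construction slightly more explicit).
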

 
 \begin{lemma}\label{l::eFMCondition}
 Let $N(M) = N_{\EE}(M)$. For an $M$-distance space $(X,d_{X,M})$ to be an $FM$-distance space it is sufficient, and 
 if there exists a null sequence $\{\varepsilon_n\}\subset\EE$,
 then it is necessary, that for any $\varepsilon\in \mathcal{E}$ there exists $\delta\in\mathcal{E}$ such that for all $n\in\NN$ and $x_1,\ldots, x_n\in X$ the inequality 
$$
\sum\limits_{k=1}^{n-1} d_{X,M}(x_k,x_{k+1})< \delta
$$
implies $d_{X,M}(x_1,x_n)< \varepsilon$.
 \end{lemma}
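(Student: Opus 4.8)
The plan is to prove the ``sufficient'' and ``necessary'' halves separately; in both cases the work is just unwinding the definition of $N_\EE(M)$ from Definition~\ref{def::solidM+}, namely that $\{a_k\}\subset M_+$ lies in $N_\EE(M)$ exactly when for every $\varepsilon\in\EE$ there is $K$ with $a_k<\varepsilon$ for all $k\ge K$. For sufficiency, I would assume the stated $(\varepsilon,\delta)$-condition and verify the strong Fr\'echet--Wilson property directly: given $\{x_n\}\subset X$ and increasing $\{n_k\},\{m_k\}$ with $m_k>n_k$ and $\{\sum_{s=n_k}^{m_k-1}d_{X,M}(x_s,x_{s+1})\}\in N_\EE(M)$, fix $\varepsilon\in\EE$, take the corresponding $\delta\in\EE$, and choose $K$ with $\sum_{s=n_k}^{m_k-1}d_{X,M}(x_s,x_{s+1})<\delta$ for all $k\ge K$. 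For each such $k$, apply the hypothesis to the finite chain $x_{n_k},x_{n_k+1},\dots,x_{m_k}$ (which has $m_k-n_k+1\ge2$ terms), whose consecutive-distance sum is precisely $\sum_{s=n_k}^{m_k-1}d_{X,M}(x_s,x_{s+1})<\delta$, to conclude $d_{X,M}(x_{n_k},x_{m_k})<\varepsilon$. Since $\varepsilon$ was arbitrary, $\{d_{X,M}(x_{n_k},x_{m_k})\}\in N_\EE(M)$; no obstacle arises here.

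For necessity I would assume $(X,d_{X,M})$ is an $FM$-space and $\{\varepsilon_n\}\subset\EE$ is a null sequence, and argue by contraposition. If the $(\varepsilon,\delta)$-condition fails, there is a \emph{single} $\varepsilon\in\EE$ such that for every $j\in\NN$, putting $\delta=\varepsilon_j$, one can pick $p_j\in\NN$ and points $x_1^{(j)},\dots,x_{p_j}^{(j)}\in X$ with $\sum_{i=1}^{p_j-1}d_{X,M}(x_i^{(j)},x_{i+1}^{(j)})<\varepsilon_j$ but $d_{X,M}(x_1^{(j)},x_{p_j}^{(j)})\not<\varepsilon$; here $p_j\ge2$, since $p_j=1$ would give $d_{X,M}(x_1^{(j)},x_{p_j}^{(j)})=\theta_M<\varepsilon$, a contradiction. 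I would then concatenate these chains into one sequence $\{x_s\}_{s\in\NN}$: set $L_0=0$, $L_j=L_{j-1}+p_j$, let positions $L_{j-1}+1,\dots,L_j$ carry $x_1^{(j)},\dots,x_{p_j}^{(j)}$, and put $n_k=L_{k-1}+1$, $m_k=L_k$. These are increasing and $m_k>n_k$ (because $p_k\ge2$), $\sum_{s=n_k}^{m_k-1}d_{X,M}(x_s,x_{s+1})$ equals the $k$-th block sum, so $\theta_M\le\sum_{s=n_k}^{m_k-1}d_{X,M}(x_s,x_{s+1})\le\varepsilon_k$, and hence by property~(3) of Definition~\ref{def::nullSequences} (comparison with the null sequence $\{\varepsilon_k\}$) the sequence of block sums lies in $N_\EE(M)$. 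On the other hand $d_{X,M}(x_{n_k},x_{m_k})=d_{X,M}(x_1^{(k)},x_{p_k}^{(k)})\not<\varepsilon$ for every $k$, so $\{d_{X,M}(x_{n_k},x_{m_k})\}\notin N_\EE(M)$, contradicting the strong Fr\'echet--Wilson property.

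The one point that needs care is the bookkeeping in the necessity part: the chains must be concatenated so that each chosen window $[n_k,m_k]$ coincides with exactly one original chain, so that the window's consecutive-distance sum is the controlled quantity $\le\varepsilon_k$ while its endpoint distance is the uncontrolled $d_{X,M}(x_1^{(k)},x_{p_k}^{(k)})$; one should also check that the first window, which starts at $n_1=1$, still satisfies $m_1>n_1$, which again follows from $p_1\ge2$. Everything else is a routine translation of the definition of $N_\EE(M)$.
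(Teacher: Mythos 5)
Your proof is correct and follows essentially the same route as the paper: sufficiency by directly verifying the strong Fr\'echet--Wilson property via the $(\varepsilon,\delta)$-condition, and necessity by concatenating the counterexample chains chosen with $\delta=\varepsilon_k$ from the assumed null sequence and applying the strong Fr\'echet--Wilson property to the windows given by the blocks. Your extra bookkeeping remarks (e.g.\ $p_j\ge 2$ and the comparison step placing the block sums in $N_{\EE}(M)$) only make explicit details the paper leaves implicit.
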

The proof of the following lemma is similar to the proof of Lemma~\ref{l::eCauchySeries}.
 \begin{lemma}
  Let $N(M) = N_{\EE}(M)$. A sequence $\{x_n\}$ in an $FM$-distance space is a Cauchy sequence if and only if for all $\varepsilon\in\EE$ there exists $N\in\NN$ such that $d_{X,M}(x_n,x_m)<\varepsilon$ for all $m\geq n\geq N$.
 \end{lemma}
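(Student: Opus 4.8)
The plan is to unfold both the definition of a Cauchy sequence and the definition of $N_{\EE}(M)$ (Definition~\ref{def::solidM+}), and then prove the two implications separately, mirroring the proof of Lemma~\ref{l::eCauchySeries} with the partial sums of a series replaced by the distances $d_{X,M}(x_n,x_m)$. Note that the strong Fr\'{e}chet--Wilson property of the ambient $FM$-space plays no role; the statement is really a reformulation of the definition of a Cauchy sequence under the choice $N(M)=N_{\EE}(M)$.

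For the ``if'' part, I would assume that for every $\varepsilon\in\EE$ there is an $N=N(\varepsilon)\in\NN$ with $d_{X,M}(x_n,x_m)<\varepsilon$ whenever $m\ge n\ge N$. Let $\{n_k\}$ and $\{m_k\}$ be increasing sequences of natural numbers with $m_k>n_k$ for all $k$; one must show $\{d_{X,M}(x_{n_k},x_{m_k})\}\in N_{\EE}(M)$. Fix $\varepsilon\in\EE$ and take the corresponding $N$. Since $\{n_k\}$ is increasing it tends to infinity, so there is $K\in\NN$ with $n_k\ge N$ for all $k\ge K$; then $m_k>n_k\ge N$ gives $d_{X,M}(x_{n_k},x_{m_k})<\varepsilon$ for all $k\ge K$. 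As $\varepsilon\in\EE$ was arbitrary, this is precisely the defining property of $N_{\EE}(M)$, so $\{x_n\}$ is a Cauchy sequence.

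For the ``only if'' part I would argue by contraposition. Suppose the stated $\varepsilon$-condition fails: there is $\varepsilon_0\in\EE$ such that for every $N\in\NN$ there exist $m\ge n\ge N$ with $d_{X,M}(x_n,x_m)\not<\varepsilon_0$. Since $\varepsilon_0\in M_+^0$ we have $d_{X,M}(x_n,x_n)=\theta_M<\varepsilon_0$, so any such pair must satisfy $m>n$. I then construct recursively strictly increasing sequences $\{n_k\}$ and $\{m_k\}$ with $m_k>n_k$ and $d_{X,M}(x_{n_k},x_{m_k})\not<\varepsilon_0$ for every $k$: start from a pair $m_1>n_1\ge 1$ furnished by $N=1$, and, having chosen $n_k<m_k$, apply the failure with $N=m_k+1$ to obtain a pair $m_{k+1}>n_{k+1}\ge m_k+1$. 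These index sequences are admissible in the definition of a Cauchy sequence, yet $\{d_{X,M}(x_{n_k},x_{m_k})\}\notin N_{\EE}(M)$, since the defining property of $N_{\EE}(M)$ fails for $\varepsilon_0$. Hence $\{x_n\}$ is not a Cauchy sequence, which establishes the contrapositive.

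I do not expect a genuine obstacle: the whole argument is a line-by-line transcription of the proof of Lemma~\ref{l::eCauchySeries}. The only point requiring a little care is the recursion in the contrapositive, where one must guarantee that the produced index sequences are strictly increasing and satisfy $m_k>n_k$ so that they are legitimate in the definition of a Cauchy sequence; the choice $N=m_k+1$ at each step takes care of this.
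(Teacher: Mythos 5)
Your proof is correct and follows essentially the same route as the paper, which simply notes that the proof is analogous to that of Lemma~\ref{l::eCauchySeries}: the easy direction by passing to tails of the increasing index sequences, and the converse by building strictly increasing sequences $\{n_k\}$, $\{m_k\}$ witnessing the failure for a fixed $\varepsilon_0\in\EE$. Your extra care with the strict inequality $m_k>n_k$ (using $\theta_M<\varepsilon_0$) and the observation that the strong Fr\'echet--Wilson property is not needed are both accurate.
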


\subsection{Examples of FM-distance spaces.}\label{s::examples}
Below we adduce several examples of $FM$-distance spaces. 

{\bf 1. $M$-metric spaces.}
    Let $X$ be a set, $M$ be a partially ordered monoid. 
A mapping $h_{X,M}(\cdot,\cdot)\colon X\times X\to M_+$ is called an $M$-valued metric (or $M$-metric), if the following conditions hold:
\begin{enumerate}
    \item For all $x,y\in X$, $x=y$ if and only if $ h_{X,M}(x,y)=\theta_M$;
     \item  For all $x,y\in X$, $h_{X,M}(x,y)=h_{X,M}(y,x)$;
    \item For all $x,y,z\in X$, $h_{X,M}(x,y)\le h_{X,M}(x,z)+h_{X,M}(z,y)$.
\end{enumerate}
The pair $(X,h_{X,M})$ is called an $M$-metric space. 
Each $M$-metric space is an $FM$-distance space.

   {\bf 2. $K$-metric, $K$-normed, and Cone metric spaces} are partial cases of $FM$-distance spaces (see for details~\cite{Zabrejko,Jancovic} and references therein). 
   
   {\bf 3. $FM$-distance spaces that are not metric spaces.}
The pair $(\mathbb{R}, d_{\mathbb{R},\mathbb{R}})$, where
    $$
        d_{\RR,\RR}(x,y) =
        \begin{cases}
         |x-y|,& |x-y|\leq 1,\\
         (x-y)^2, & |x-y|>1
        \end{cases}
    $$
    is an example of an $FM$-distance space that is not a metric space.
    
    The pair $(\mathbb{R}, (x,y)\to (x-y)^2)$, 
    is an example of an $M$-distance space that satisfies the  Fr\'{e}chet-Wilson property, but does not satisfy the strong Fr\'{e}chet-Wilson property.
    
     Let $C(T ,X)$ be the space of continuous functions defined on a metric compact $T$ with values in an $L$-space $X$.
    Definitions and necessary facts from the theory of $L$-spaces, in particular the definition of the Lebesgue integral for an $L$-spaces valued functions that will be needed in Section~\ref{s::fredholmEquation},  can be found in~\cite{Vahrameev, Babenko20}.
    
    Set  
    $d(x,y)=d_{\RR,\RR}(h_X(x(\cdot), y(\cdot)), 0)$. We obtain a  $C(T,\mathbb{R})$-valued distance in $C(T ,X)$. With such distance function, the pair $(C(T ,X), d)$ becomes and $FM$-distance space.
    
{\bf 4. Cartesian product of $FM$-distance spaces.} Let $(X,d_{X,M})$ be an $FM$-distance space. On $X^m$, $m\in\NN$, one can define different  $M$-distances to make $X^m$ an $FM$-distance space. For example one can set
    $$
    d_{X^m,M}^\Sigma((x_1,\ldots,x_m),(y_1,\ldots,y_m))=\sum_{k=1}^md_{X,M}(x_k,y_k).
    $$
    Instead of the distance $d^\Sigma_{X^m,M}$ one can consider (if $M$ satisfies the upper Riesz property and from  $\{x^n\}, \{y^n\}\in N(M)$ it follows $\{x^n\vee y^n\}\in N(M)$) the distance 
    $$
    d_{X^m,M}^\vee((x_1,\ldots,x_m),(y_1,\ldots,y_m))=\bigvee_{k=1}^md_{X,M}(x_k,y_k).
    $$
    Another distance function can be defined as
    $$
    d^m_{X^m,M^m}((x_1,\ldots,x_m),(y_1,\ldots,y_m))=(d_{X,M}(x_1,y_1),\ldots,d_{X,M}(x_m,y_m)).
    $$
    Here $M^m$ is a monoid with coordinate-wise addition and order.

\section{Contraction mapping theorems}\label{s::contractionTheorem}
\subsection{A contraction mapping theorem in FM-distance spaces}
\begin{definition}
 An $FM$-distance space $(X,d_{X,M})$ will be called complete, if each Cauchy sequence converges to an element from the space $X$.
\end{definition}

\begin{definition}
Denote by $\mathcal{L}(N(M))$ the class of operators $\lambda\colon M_+\to M_+$ that satisfy the following properties:
\begin{enumerate}
    \item $\lambda $ is non-decreasing, i.~e. if $x\leq y$, then $\lambda(x)\leq \lambda(y)$;
    \item If $\{ x_n\}\in N(M)$, then $\{\lambda( x_n)\}\in N(M)$;
    \item For arbitrary $\alpha\in M_+$, \begin{equation}\label{strongConvergence}
  \sum_{n=1}^\infty \lambda^n(\alpha) 
\end{equation}
is a Cauchy series.
\end{enumerate}
\end{definition}

\begin{theorem}\label{th::generalContraction}
Let a complete $FM$-distance space $(X,d_{X,M})$
and  $f\colon X\to X$ be given. Assume that  there exists $\lambda\in \mathcal{L}(N(M))$ such that 
\begin{equation}\label{strongContraction}
    d_{X,M}(f(x),f(y))\leq \lambda(d_{X,M}(x,y)) \text { for all } x,y\in X.
\end{equation}
Then  $f$ has a unique fixed point, which is the limit of the sequence $x_n = f(x_{n-1})$, $n\in\NN$, with arbitrarily chosen $x_0\in X$.
\end{theorem}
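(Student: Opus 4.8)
The plan is to run the classical Picard iteration, replacing the usual metric estimates by the closure axioms on $N(M)$ and $\mathcal{L}(N(M))$. Fix $x_0\in X$, form the orbit $x_n=f(x_{n-1})$, and set $\alpha:=d_{X,M}(x_0,x_1)$. First I would prove by induction, using the monotonicity of $\lambda$ together with \eqref{strongContraction}, that
$d_{X,M}(x_n,x_{n+1})=d_{X,M}(f(x_{n-1}),f(x_n))\le\lambda\bigl(d_{X,M}(x_{n-1},x_n)\bigr)\le\cdots\le\lambda^{n}(\alpha)$
for every $n\in\NN$ (and $d_{X,M}(x_0,x_1)=\alpha=\lambda^0(\alpha)$).

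Next I would show the series $\sum_{k=1}^\infty d_{X,M}(x_k,x_{k+1})$ is a Cauchy series. By property~(3) in the definition of $\mathcal{L}(N(M))$, the series $\sum_{n=1}^\infty\lambda^n(\alpha)$ is a Cauchy series, so for any increasing sequences $\{n_k\},\{m_k\}$ with $m_k\ge n_k$ one has $\bigl\{\sum_{s=n_k}^{m_k}\lambda^s(\alpha)\bigr\}\in N(M)$. Since $\theta_M\le\sum_{s=n_k}^{m_k}d_{X,M}(x_s,x_{s+1})\le\sum_{s=n_k}^{m_k}\lambda^s(\alpha)$, property~(3) of Definition~\ref{def::nullSequences} gives $\bigl\{\sum_{s=n_k}^{m_k}d_{X,M}(x_s,x_{s+1})\bigr\}\in N(M)$, which is exactly the Cauchy-series condition. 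Lemma~\ref{lem::connection} then shows $\{x_n\}$ is a Cauchy sequence, and completeness of $(X,d_{X,M})$ yields $x\in X$ with $x_n\to x$.

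It then remains to identify $x$ as the unique fixed point. Applying \eqref{strongContraction} once more, $d_{X,M}(x_{n+1},f(x))=d_{X,M}(f(x_n),f(x))\le\lambda\bigl(d_{X,M}(x_n,x)\bigr)$; since $\{d_{X,M}(x_n,x)\}\in N(M)$, property~(2) of $\mathcal{L}(N(M))$ forces $\{d_{X,M}(x_{n+1},f(x))\}\in N(M)$, i.e.\ $x_{n+1}\to f(x)$. But $\{x_{n+1}\}$ is $\{x_n\}$ with one term removed, so by property~(4) of Definition~\ref{def::nullSequences} it also converges to $x$; as an $FM$-space satisfies the strong Fr\'echet--Wilson property and hence the Fr\'echet--Wilson property, limits are unique, so $f(x)=x$. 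For uniqueness, if $f(x)=x$ and $f(y)=y$, then iterating \eqref{strongContraction} (using monotonicity of $\lambda$) gives $\theta_M\le d_{X,M}(x,y)\le\lambda^n\bigl(d_{X,M}(x,y)\bigr)$ for all $n$; taking $n_k=m_k=k$ in the Cauchy-series property of $\sum_n\lambda^n\bigl(d_{X,M}(x,y)\bigr)$ shows $\bigl\{\lambda^n(d_{X,M}(x,y))\bigr\}\in N(M)$, so by property~(3) the constant sequence $\{d_{X,M}(x,y),d_{X,M}(x,y),\ldots\}$ lies in $N(M)$, and by property~(1) $d_{X,M}(x,y)=\theta_M$, i.e.\ $x=y$.

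There is no single deep obstacle: the theorem is a structural transcription of the Banach contraction principle. The part needing the most care is the bookkeeping that turns the pointwise domination $d_{X,M}(x_n,x_{n+1})\le\lambda^n(\alpha)$ into the Cauchy-series and hence Cauchy-sequence conclusion — in particular invoking the sandwiching, subsequence and finite-modification axioms of $N(M)$ at the right moments, and recognizing that the strong Fr\'echet--Wilson property (through Lemma~\ref{lem::connection}) is precisely what converts summability of consecutive distances into the Cauchy property of the orbit. A secondary subtlety is that continuity of $f$ is not available; $f(x)=x$ is instead extracted from property~(2) of $\mathcal{L}(N(M))$ together with uniqueness of limits.
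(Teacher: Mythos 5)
Your proof is correct and follows essentially the same route as the paper: Picard iteration dominated by $\lambda^n(d_{X,M}(x_0,x_1))$, the sandwich axiom of $N(M)$ plus Lemma~\ref{lem::connection} and completeness to get the limit, the Fr\'echet--Wilson property (you route it through uniqueness of limits, the paper applies it directly to $d_{X,M}(x_n,\overline{x})+d_{X,M}(x_n,f(\overline{x}))$, which is the same mechanism) to identify the fixed point, and the estimate $d_{X,M}(x,y)\le\lambda^n(d_{X,M}(x,y))$ with the Cauchy-series property for uniqueness. The only cosmetic remark is that your step ``$x_{n+1}\to f(x)$'' also uses axiom~(3) of Definition~\ref{def::nullSequences} alongside property~(2) of $\mathcal{L}(N(M))$, which you clearly have in hand.
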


\begin{proof}
Let $x_0\in X$. Then for $n\in\NN$, due to monotonicity of $\lambda$, one has 
\begin{equation}\label{distDelta}
d_{X,M}(x_n,x_{n+1})
\leq 
\lambda(d_{X,M}(x_{n-1},x_{n}))
 \leq \ldots\leq
\lambda^n(d_{X,M}(x_{0},x_{1})). 
\end{equation}
Since $\sum_{n=1}^\infty \lambda^n(d_{X,M}(x_{0},x_{1}))$ is a Cauchy series, due to~\eqref{distDelta}, we obtain that the series $$
\sum_{k=1}^{\infty} d_{X,M}(x_n, x_{n+1})
$$
is also a Cauchy series. Lemma~\ref{lem::connection} implies that  $\{ x_n\}$ is a Cauchy sequence, and hence has a limit
$$
x_n=f(x_{n-1})\to \overline{x},\text{ as } n\to \infty.    
$$
Next we show that $f(\overline{x})=\overline{x}$. 
Since $\{d_{X,M}(x_n,\overline{x})\}\in N(M)$, 
$$
d_{X,M}(x_n,f(\overline{x})) 
= 
d_{X,M}(f(x_{n-1}),f(\overline{x}))
\leq 
\lambda (d_{X,M}(x_{n-1},\overline{x})),
$$
and by assumptions on $\lambda$, $\{\lambda (d_{X,M}(x_{n-1},\overline{x}))\}\in N(M)$, then due to the Fr\'{e}chet-Wilson property, we obtain that 
$$
\{d_{X,M}(f(\overline{x}), \overline{x})\}\in N(M)
$$
and hence $f(\overline{x}) = \overline{x}$.

Assume that
there are two points $\overline{x},\underline{x}\in X$  such that $f(\overline{x}) =\overline{x}$ and $f(\underline{x}) = \underline{x}$.
 Then for all $n\in \NN$,
 {
\begin{equation}\label{lambda^n}
d_{X,M}(\overline{x},\underline{x})
=
d_{X,M}(f^n(\overline{x}),f^n(\underline{x}))
 \leq 
\lambda^n(d_{X,M}(\overline{x},\underline{x})).
\end{equation}
}
From the definition of a Cauchy series, one obtains
$$
\{\lambda^n(d_{X,M}(\overline{x},\underline{x}))\}\in N(M). 
$$
Hence, due to arbitrariness of $n\in\NN$ in~\eqref{lambda^n},
 we obtain $\{d_{X,M}(\overline{x},\underline{x})\}\in N(M)$, thus  $d_{X,M}(\overline{x},\underline{x}) = \theta_M$ and $\overline{x}=\underline{x}$.
The theorem is proved.
\end{proof}

Using the same arguments one can relax the requirement on $\lambda$, if instead impose {the following }additional restriction on $f$:
{
\begin{equation}\label{orbitalContinuity}
    \text{If } x\in X, f^n(x)\to a \text { as } n\to\infty, \text{ then } f(a) =a.
\end{equation}
Such continuity requirement, in particular, follows from orbital continuity (see e.g.~\cite[Definition 2.3]{NietoPousoRodriges}) and uniqueness of the limit.
}


\begin{theorem}\label{th::continuousF}
Let a complete $FM$-distance space $(X,d_{X,M})$
and $f\colon X\to X$ {that satisfies~\eqref{orbitalContinuity}} be given. Assume that a mapping $\lambda\colon M_+\to M_+$ is non-decreasing and $x_0\in X$ is such that for the sequence $x_n = f(x_{n-1})$, $n\in\NN$, one has
\begin{equation}\label{contraction}
   d_{X,M}(x_n,x_{n+1})\leq \lambda(d_{X,M}(x_{n-1},x_n)) \text{ for all } n\in\NN,
\end{equation}
and
\begin{equation}\label{convergence}
\sum_{n=1}^\infty \lambda^n(d_{X,M}(x_0, x_1))
\end{equation}
is a Cauchy series. Then the mapping $f$ has a fixed point, which is the limit of the sequence $\{x_n\}$. If instead of conditions~\eqref{contraction} and~\eqref{convergence}, stronger conditions~\eqref{strongContraction} and~\eqref{strongConvergence} hold, then  $f$ has a unique fixed point.
\end{theorem}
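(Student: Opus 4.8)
The plan is to follow the proof of Theorem~\ref{th::generalContraction} almost verbatim, replacing the single place where the Fr\'{e}chet-Wilson property was used (to identify the limit of the orbit as a fixed point) by an appeal to the new continuity hypothesis~\eqref{orbitalContinuity}.

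First I would fix $x_0\in X$ and consider the orbit $x_n=f(x_{n-1})$, so that $x_n=f^n(x_0)$ for every $n$. Iterating~\eqref{contraction} and using that $\lambda$ is non-decreasing gives
$$
d_{X,M}(x_n,x_{n+1})\leq \lambda\bigl(d_{X,M}(x_{n-1},x_n)\bigr)\leq\cdots\leq\lambda^n\bigl(d_{X,M}(x_0,x_1)\bigr),\qquad n\in\NN,
$$
exactly as in~\eqref{distDelta}. Since the series~\eqref{convergence} is a Cauchy series, for any increasing sequences $\{n_k\},\{m_k\}$ with $m_k\geq n_k$ we have $\bigl\{\sum_{s=n_k}^{m_k}\lambda^s(d_{X,M}(x_0,x_1))\bigr\}\in N(M)$; monotonicity of $+$ with respect to $\leq$ then yields $\theta_M\leq\sum_{s=n_k}^{m_k}d_{X,M}(x_s,x_{s+1})\leq\sum_{s=n_k}^{m_k}\lambda^s(d_{X,M}(x_0,x_1))$, so property~(3) of $N(M)$ forces $\bigl\{\sum_{s=n_k}^{m_k}d_{X,M}(x_s,x_{s+1})\bigr\}\in N(M)$. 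Hence $\sum_{k=1}^\infty d_{X,M}(x_k,x_{k+1})$ is a Cauchy series, Lemma~\ref{lem::connection} shows that $\{x_n\}$ is a Cauchy sequence, and completeness provides $\overline{x}\in X$ with $x_n\to\overline{x}$ as $n\to\infty$ (the limit being unique because an $FM$-distance space satisfies the Fr\'{e}chet-Wilson property).

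Now, instead of invoking the Fr\'{e}chet-Wilson property again, I would simply observe that $\overline{x}=\lim_{n\to\infty}f^n(x_0)$, whence~\eqref{orbitalContinuity} gives $f(\overline{x})=\overline{x}$; this proves existence, and $\overline{x}$ is by construction the limit of $\{x_n\}$. Under the stronger hypotheses~\eqref{strongContraction} and~\eqref{strongConvergence}, condition~\eqref{contraction} holds automatically along every orbit and~\eqref{convergence} is a special case of~\eqref{strongConvergence}, so the existence conclusion still applies; for uniqueness I would repeat the last paragraph of the proof of Theorem~\ref{th::generalContraction}: if $\overline{x},\underline{x}$ are fixed points, then~\eqref{strongContraction} iterated gives $d_{X,M}(\overline{x},\underline{x})=d_{X,M}(f^n(\overline{x}),f^n(\underline{x}))\leq\lambda^n(d_{X,M}(\overline{x},\underline{x}))$ for every $n$; taking $n_k=m_k=k$ in the definition of a Cauchy series applied to~\eqref{strongConvergence} with $\alpha=d_{X,M}(\overline{x},\underline{x})$ shows $\{\lambda^n(d_{X,M}(\overline{x},\underline{x}))\}\in N(M)$, and then property~(3) of $N(M)$ applied to the constant sequence $\{d_{X,M}(\overline{x},\underline{x})\}$ together with property~(1) gives $d_{X,M}(\overline{x},\underline{x})=\theta_M$, i.e. $\overline{x}=\underline{x}$.

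I do not expect a genuine obstacle here: the argument is a routine weakening of Theorem~\ref{th::generalContraction}. The only steps requiring a little care are the transfer of the property ``to be a Cauchy series'' from the majorant $\sum\lambda^n(\cdot)$ to $\sum d_{X,M}(x_n,x_{n+1})$ via the domination axiom~(3) of $N(M)$, and the observation that~\eqref{orbitalContinuity} is precisely the hypothesis that compensates for dropping the extra requirements on $\lambda$ (in particular, that it map null sequences to null sequences) which were used in Theorem~\ref{th::generalContraction} to identify the limit of the orbit as a fixed point.
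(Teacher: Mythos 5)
Your proposal is correct and is essentially the argument the paper intends: the paper proves this theorem only by remarking that the proof of Theorem~\ref{th::generalContraction} goes through with condition~\eqref{orbitalContinuity} replacing the Fr\'{e}chet-Wilson/null-sequence step that identifies the limit of the orbit as a fixed point, which is exactly what you carried out (including the domination argument via property~(3) of $N(M)$ and the uniqueness argument under~\eqref{strongContraction} and~\eqref{strongConvergence}).
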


\subsection{A contraction mapping theorem in partially ordered FM-distance spaces}

\begin{definition}
An $FM$-distance space $(X,d_{X,M})$ is called partially ordered, if $X$ is a partially ordered set.
\end{definition}

\begin{definition}
The $N(M)$-convergence in a partially ordered  $FM$-distance space is called upper regular, if for each non-decreasing sequence $$x_1\leq x_2\leq\ldots\leq x_n\leq\ldots$$ such that $\lim_{n\to\infty} x_n = x$, one has $x_n\leq x$ for all $n\in\NN$. In this case we also say that the convergence in the monoid $M$ is upper regular. 

Similarly one can introduce the notion of lower regularity. 
 If the convergence is both upper and lower regular, then we say that it is regular.
\end{definition}

\begin{theorem}\label{th::monotoneMappingContraction}
Let $(X,d_{X,M})$ be a partially ordered complete $FM$-distance space with upper regular family of $N(M)$-convergent sequences. Let $\lambda\in\mathcal{L}(N(M))$ and an operator $f\colon X\to X$ be  non-decreasing and such that for all $x,y\in X$ such that  $x\leq y$ condition~\eqref{strongContraction} holds. Assume that there exists $x_0\in X$ such that $x_0\leq f(x_0)$.
Then the operator $f$ has at least one fixed point, which is the limit of the sequence  $\{x_n = f(x_{n-1})\}$, $n\in\NN$. 

It is unique inside the set of points $x\in X$ that are comparable with $x_0$. In the case, when $X$ satisfies the upper Reisz property, the fixed point is unique in the whole space $X$.
\end{theorem}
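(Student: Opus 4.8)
The plan is to mimic the proof of Theorem~\ref{th::generalContraction}, but track the order structure carefully. First I would build the iterative sequence $x_n=f(x_{n-1})$ starting from the given $x_0$ with $x_0\le f(x_0)=x_1$. Using monotonicity of $f$ and induction, I would show the sequence is non-decreasing: $x_0\le x_1$ implies $f(x_0)\le f(x_1)$, i.e. $x_1\le x_2$, and so on, so that $x_n\le x_{n+1}$ for all $n$. Since consecutive terms are comparable, the contraction hypothesis~\eqref{strongContraction} applies to each pair $(x_{n-1},x_n)$, giving, exactly as in~\eqref{distDelta},
$$
d_{X,M}(x_n,x_{n+1})\le\lambda(d_{X,M}(x_{n-1},x_n))\le\cdots\le\lambda^n(d_{X,M}(x_0,x_1)).
$$
Because $\lambda\in\mathcal L(N(M))$, the series $\sum_n\lambda^n(d_{X,M}(x_0,x_1))$ is a Cauchy series, hence by property~(3) of null sequences and the comparison above, $\sum_n d_{X,M}(x_n,x_{n+1})$ is a Cauchy series; Lemma~\ref{lem::connection} then gives that $\{x_n\}$ is a Cauchy sequence, and completeness yields a limit $\overline x$ with $x_n\to\overline x$.

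Next I would prove $f(\overline x)=\overline x$. Here the upper regularity of the convergence is the crucial new ingredient: since $\{x_n\}$ is non-decreasing with limit $\overline x$, upper regularity gives $x_n\le\overline x$ for all $n$. Then each pair $(x_{n-1},\overline x)$ is comparable, so~\eqref{strongContraction} applies:
$$
d_{X,M}(x_n,f(\overline x))=d_{X,M}(f(x_{n-1}),f(\overline x))\le\lambda(d_{X,M}(x_{n-1},\overline x)).
$$
Since $\{d_{X,M}(x_{n-1},\overline x)\}\in N(M)$ and $\lambda$ preserves null sequences, the right side is a null sequence; combined with $\{d_{X,M}(x_n,\overline x)\}\in N(M)$ and the Fr\'echet--Wilson property (valid in any $FM$-space), we get $\{d_{X,M}(f(\overline x),\overline x)\}\in N(M)$, hence $f(\overline x)=\overline x$.

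For uniqueness among points comparable with $x_0$, suppose $\underline x$ is a fixed point with $\underline x$ comparable to $x_0$, say $x_0\le\underline x$ (the other case is symmetric). By monotonicity, $x_n=f^n(x_0)\le f^n(\underline x)=\underline x$ for all $n$, so again $x_n\le\underline x$, and passing to the limit with upper regularity plus uniqueness of limits in an $FM$-space, $\overline x\le\underline x$; but also, applying~\eqref{strongContraction} to the comparable pair $(\overline x,\underline x)$ repeatedly,
$$
d_{X,M}(\overline x,\underline x)=d_{X,M}(f^n(\overline x),f^n(\underline x))\le\lambda^n(d_{X,M}(\overline x,\underline x)),
$$
and since $\{\lambda^n(d_{X,M}(\overline x,\underline x))\}\in N(M)$ by the Cauchy-series property and $n$ is arbitrary, property~(3) forces $d_{X,M}(\overline x,\underline x)=\theta_M$, i.e. $\overline x=\underline x$. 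Finally, if $X$ has the upper Riesz property, given any fixed point $z\in X$, form $x_0\vee z$; I would argue that the iterates starting from $x_0\vee z$ converge to a fixed point comparable to both $\overline x$ and $z$, forcing $z=\overline x$. The main obstacle I anticipate is precisely this last step: ensuring that $x_0\vee z$ still satisfies $x_0\vee z\le f(x_0\vee z)$ (which follows from $x_0\le f(x_0)$, $z=f(z)\le f(x_0\vee z)$, and the supremum property) and that the resulting limit is genuinely comparable to $z$ so the uniqueness-within-comparables argument can be reused; handling the two orientations of comparability uniformly, and confirming upper regularity is all that is needed (rather than full regularity), also requires care.
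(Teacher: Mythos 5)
Your existence argument (monotone iterates, comparability of consecutive terms so that \eqref{strongContraction} yields \eqref{distDelta}, Lemma~\ref{lem::connection}, completeness, then upper regularity plus the Fr\'echet--Wilson property to get $f(\overline x)=\overline x$) is exactly the paper's and is fine. The gap is in both uniqueness steps, and it is the same gap twice: you use upper regularity to pass an order relation to the limit. As defined in the paper, upper regularity only says that the terms of a convergent non-decreasing sequence lie \emph{below its own limit}; it does not say that $x_n\le\underline x$ for all $n$ and $x_n\to\overline x$ imply $\overline x\le\underline x$ (the $N(M)$-convergence is abstract and no order-closedness under limits is assumed). Yet your uniqueness-among-comparables step needs precisely this to make $(\overline x,\underline x)$ a comparable pair before applying \eqref{strongContraction} to it, and your upper-Riesz plan needs the limit of the iterates started at $x_0\vee z$ to be comparable to $z$ and to $\overline x$ --- the very point you flag as the main obstacle and leave unresolved.

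The way out, which is how the paper argues, is never to take limits in the order but to propagate comparability through the iterates by monotonicity of $f$ and then estimate distances. If $\underline x$ is a fixed point comparable with $x_0$, then $x_n=f^n(x_0)$ and $\underline x=f^n(\underline x)$ are comparable for every $n$, so $d_{X,M}(x_n,\underline x)\le\lambda^n(d_{X,M}(x_0,\underline x))$, which is a null sequence; hence $x_n\to\underline x$, and uniqueness of limits (from the Fr\'echet--Wilson property) gives $\underline x=\overline x$ --- no comparability of $\overline x$ with $\underline x$ is ever invoked. In the upper-Riesz case the paper compares the three orbits of $\overline x$, $x_0\vee\overline x$ and $x_0$: since $\overline x\le x_0\vee\overline x$ and $x_0\le x_0\vee\overline x$, both $d_{X,M}(f^n(\overline x),f^n(x_0\vee\overline x))\le\lambda^n(d_{X,M}(\overline x,x_0\vee\overline x))$ and $d_{X,M}(f^n(x_0\vee\overline x),f^n(x_0))\le\lambda^n(d_{X,M}(x_0\vee\overline x,x_0))$ are null sequences, so the Fr\'echet--Wilson property gives $f^n(x_0)\to\overline x$; the same chain with a second fixed point $\underline x$ gives $f^n(x_0)\to\underline x$, and uniqueness of limits forces $\overline x=\underline x$. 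In particular, neither the inequality $x_0\vee z\le f(x_0\vee z)$ nor convergence of the iterates started at $x_0\vee z$ is needed, so the machinery you planned to set up for that auxiliary orbit (and the comparability of its limit with $z$) can be dropped altogether.
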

\begin{proof}
From the conditions of the theorem, we obtain that $x_0 \leq f(x_0) = x_1$, $x_1 = f(x_0)\leq f(x_1) = x_2$, and so on. Hence $x_n\leq x_m$ for arbitrary $m\ge n\in \ZZ_+$. 

Therefore inequalities~\eqref{distDelta} hold,
the series $$
\sum_{k=1}^{\infty} d_{X,M}(x_n, x_{n+1})
$$ is a Cauchy series, and due to Lemma~\ref{lem::connection},  $\{ x_n\}$ is a Cauchy sequence. 
Completeness of $(X,d_{X,M})$ implies that there exists $x\in X$ such that $x_n\to x$ as $n\to\infty$. Next we show that  $f(x)=x$.

Since the convergence in the partially ordered set $X$ is upper regular by the assumptions of the theorem, we obtain $x\geq x_n$ for all $n\in \mathbb{Z}_+$. Hence, due to monotonicity of $f$, $x_n\leq f(x)$ for all $n\in \mathbb{Z}_+$. Thus, 
$$
d_{X,M}(x_n,f(x))=d_{X,M}(f(x_{n-1}),f(x))\le \lambda(d_{X,M}(x_{n-1},x)).
$$
Since $x_n\to x$ as $n\to\infty$ and $\lambda\in \mathcal{L}(N(M))$, we obtain
$$
\{ d_{X,M}(x_n, x)\}\in N(M)\text{ and } \{ d_{X,M}(x_n, f(x))\}\in N(M).
$$
The Fr\'{e}chet-Wilson condition  implies that $\{ d_{X,M}(x, f(x))\}\in N(M)$. Hence $d_{X,M}(x, f(x))=\theta_M$ and $x=f(x)$.

If $\overline{x}$ is a fixed point of $f$ that is comparable to $x_0$, then, due to monotonicity of $f$, it is comparable to each $x_n$, $n\in\ZZ_+$. Hence 
$$
d_{X,M}(x_{n}, \overline{x}) = 
d_{X,M}(f(x_{n-1}), f^n(\overline{x})) 
\leq
\lambda^n(d_{X,M}(x_0, \overline{x}) ).
$$
Thus $\{d_{X,M}(x_{n}, \overline{x})\}\in N(M)$. Analogously, if $\underline{x}$ is also a fixed point of $f$ that is comparable to $x_0$, then $\{d_{X,M}(x_{n}, \underline{x})\}\in N(M)$. Due to the Fr\'{e}chet-Wilson property, we obtain $\{d_{X,M}(\overline{x}, \underline{x})\}\in N(M)$, which implies $\overline{x} = \underline{x}$.

Finally, assume that there are two fixed points $\overline{x},\underline{x}$ of the mapping $f$ and $X$ satisfies the upper Riesz property. 

Since $d_{X,M}(f^n(\overline{x}), f^n(x_0\vee \overline{x}))\leq \lambda^n(d_{X,M}(\overline{x}, x_0\vee \overline{x}))$ and  
$$
\sum_{k=1}^\infty\lambda^n(d_{X,M}(\overline{x}, x_0\vee \overline{x}))
$$ 
is a Cauchy series, we obtain that $\{d_{X,M}(f^n(\overline{x}), f^n(x_0\vee \overline{x}))\}\in N(M)$. Analogously $\{d_{X,M}(f^n(x_0\vee \overline{x}), f^n(x_0) )\}\in N(M)$.
Applying the Fr\'{e}chet-Wilson property 
we obtain
$$
\{d_{X,M}(\overline{x},f^n(x_0))\} 
= 
\{d_{X,M}(f^n(\overline{x}),f^n(x_0))\}\in N(M).
$$ 
Analogously 
$$
\{d_{X,M}(f^n(x_0), \underline{x})\} 
= 
\{d_{X,M}(f^n(x_0),f^n(\underline{x}))\}\in N(M).
$$ 
Finally, applying the  Fr\'{e}chet-Wilson property once again,
we obtain 
$$\{d_{X,M}(\underline{x},\overline{x})\}\in N(M),$$ and hence $\overline{x} = \underline{x}$.
The theorem is proved.
\end{proof}

Analogously to Theorem~\ref{th::continuousF}, we can relax conditions on $\lambda$ by requiring continuity of $f$.
\begin{theorem}
Let a complete $FM$-distance space $(X,d_{X,M})$
and a non-dec\-reasing mapping $f\colon X\to X$ {that satisfies~\eqref{orbitalContinuity}} be given. Assume that there exists $x_0\in X$ such that $x_0\leq f(x_0)$ and for the sequence $\{x_n = f(x_{n-1})\}$ conditions~\eqref{contraction} and~\eqref{convergence} hold. Then the mapping $f$ has a fixed point, which is the limit of the sequence $\{x_n\}$.
\end{theorem}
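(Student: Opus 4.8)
The plan is to reproduce, in the partially ordered setting, the scheme already used for Theorem~\ref{th::continuousF}: the weakening of the hypotheses on $\lambda$ (now only monotonicity, together with the requirement that~\eqref{convergence} be a Cauchy series for \emph{this particular} $x_0$) is compensated by the orbital-type continuity condition~\eqref{orbitalContinuity}, exactly as there. The only ingredient genuinely coming from the order structure is the observation that $\{x_n\}$ is non-decreasing, which however will not be needed to produce the fixed point itself.

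First I would note that from $x_0\le f(x_0)=x_1$ and monotonicity of $f$ one gets $x_1\le f(x_1)=x_2$, and, inductively, $x_n\le x_{n+1}$ for every $n\in\ZZ_+$ (cf. the proof of Theorem~\ref{th::monotoneMappingContraction}). Then, iterating~\eqref{contraction} and using that $\lambda$ is non-decreasing, I would obtain the chain~\eqref{distDelta}, i.e. $d_{X,M}(x_n,x_{n+1})\le\lambda^n(d_{X,M}(x_0,x_1))$ for all $n\in\NN$. Since~\eqref{convergence} is a Cauchy series, for arbitrary increasing sequences $\{n_k\},\{m_k\}$ with $m_k\ge n_k$ one has $\theta_M\le\sum_{s=n_k}^{m_k}d_{X,M}(x_s,x_{s+1})\le\sum_{s=n_k}^{m_k}\lambda^s(d_{X,M}(x_0,x_1))$, where the last sequence lies in $N(M)$; hence property~(3) of Definition~\ref{def::nullSequences} (domination) gives $\{\sum_{s=n_k}^{m_k}d_{X,M}(x_s,x_{s+1})\}\in N(M)$, so that $\sum_{k=1}^\infty d_{X,M}(x_k,x_{k+1})$ is itself a Cauchy series. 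Lemma~\ref{lem::connection} now shows $\{x_n\}$ is a Cauchy sequence, and completeness of $(X,d_{X,M})$ furnishes $\overline{x}\in X$ with $x_n\to\overline{x}$.

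It then remains to identify the limit: since $x_n=f^n(x_0)$ for all $n$, the convergence $f^n(x_0)\to\overline{x}$ together with~\eqref{orbitalContinuity} yields $f(\overline{x})=\overline{x}$, and $\overline{x}$ is by construction the limit of $\{x_n\}$ --- which is the assertion. I do not expect a genuine obstacle here; the single step calling for a word of care is the passage from the Cauchy series~\eqref{convergence} to the Cauchy series $\sum d_{X,M}(x_k,x_{k+1})$, which must be argued through the axioms of $N(M)$ rather than via a ``monotone limit'' argument as one would do over $\RR$, but this is precisely the step already performed in the proof of Theorem~\ref{th::generalContraction}. Note also that, in contrast to Theorem~\ref{th::monotoneMappingContraction}, no uniqueness is claimed, so the comparability and upper-Riesz arguments of that proof are not needed here.
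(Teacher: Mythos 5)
Your proposal is correct and coincides with the argument the paper intends (the theorem is stated without proof as being ``analogous to Theorem~\ref{th::continuousF}''): iterate~\eqref{contraction} to get~\eqref{distDelta}, dominate the partial sums by those of the Cauchy series~\eqref{convergence} via property~(3) of Definition~\ref{def::nullSequences}, apply Lemma~\ref{lem::connection} and completeness, and identify the limit as a fixed point through~\eqref{orbitalContinuity}. Your observation that the order hypotheses ($f$ non-decreasing, $x_0\le f(x_0)$) are not actually used, since~\eqref{contraction} is assumed directly for the orbit sequence, is accurate.
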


 \subsection{Multiple fixed point theorems.}
Let $(X,d_{X,M})$ be a partially ordered $FM$-space and $P\subset \{ 1,\ldots,m\}$. Define a partial order in the set $X^m$, setting for $x=(x_1,\ldots,x_m)$ and $ y=(y_1,\ldots,y_m)$
$$
x\preccurlyeq_{X,P} y \text{ if and only if }  x_i\le y_i \text{ for } i\in P, \text{ and } y_i\le x_i\text{ for } i\notin P.
$$ 

Let a set of mappings
$$\sigma_i\colon\{1,\ldots,m\}\to \{1,\ldots,m\},\quad i=1,\ldots ,m$$
 be given, $\Bar{\sigma}=(\sigma_1,\ldots,\sigma_m)$ and $f\colon X^m\to X$. The mappings $\Bar{\sigma}$ and $f$ generate the mapping
$\Bar{\sigma} f\colon X^m\to X^m$
in the following way:
$$(\Bar{{\sigma}} f)(x_1,\ldots,x_m)=(y_1,\ldots,y_m),$$
where
$$y_i=f(x_{\sigma_i (1)},\ldots, x_{\sigma_i (m)}).$$

\begin{definition}
An element $x\in X^m$ is called a $\Bar{\sigma}$-multiple fixed point of the mapping $f\colon X^m\to X$, if $x=\Bar{\sigma}f(x)$.
\end{definition}

\begin{definition}
A mapping $f\colon X^m\to X^m$ is called $P$-monotone, if
$$
x\preccurlyeq_{X,P} y\text{ implies } f(x)\preccurlyeq_{X,P} f(y).
$$
\end{definition}

Let $M$ be a partially ordered monoid and $N(M)$ be a family of null sequences. In  $M^m$ define the coordinate-wise order and the family  $N(M^m)$ of null sequences, setting
$$
N(M^m)=\{ (x_1^n,\ldots,x_m^n)\colon \{ x_i^n\}\in N(M),\; i=1,\ldots,m\}.
$$
In $X^m$ define a distance function, setting
$$
d^m_{X^m,M^m}(x,y)=(d_{X,M}(x_1,y_1),\ldots,d_{X,M}(x_m,y_m)).
$$
The set $X$ with the above distance and partial order $\preccurlyeq_{X,P}$ will be denoted by $(X,\preccurlyeq_{X,P}, d^m_{X^m,M^m})$.

It is clear, that if the space $(X,d_{X,M})$ is $N(M)$-complete, then the space $(X^m,\preccurlyeq_{X,P},d^m_{X^m,M^m})$ is  $N(M^m)$-complete. It is also easy to see, that if the $N(M)$-convergence in $(X,d_{X,M})$ is regular, then  the $N(M^m)$-convergence in $(X^m,\preccurlyeq_{X,P},d^m_{X^m,M^m})$ is also regular.

Taking into account the above definitions and facts, Theorem~\ref{th::monotoneMappingContraction} implies the following theorem.
 \begin{theorem}\label{th::multipleFixedPoint}
Assume that an $FM$-space  $(X,d_{X,M})$ is $N(M)$-complete and the $N(M)$-convergence in it is regular. 
Let also a set $P\subset \{ 1,\ldots,m\}$ and mappings 
$$
\sigma_i\colon\{1,\ldots,m\}\to \{1,\ldots,m\},\quad i=1,\ldots ,m,$$
be given. Assume that a mapping $\overline{\sigma}f\colon X^m\to X$  is $P$-monotone and there exists an operator $\lambda\in\mathcal{L}(N(M^m))$ such that if  $x\preccurlyeq_{X,P} y$, then
$$
d^m_{X^m,M^m}(\overline{\sigma}f(x),\overline{\sigma}f(y))\le 
\lambda (d^m_{X^m,M^m}(x,y)).
$$
If there exists $x^0\in X^m$ such that  $x^0\preccurlyeq_{X,P}f(x^0)$, then there exists $x\in X^m$ such that  $x=\overline{\sigma}f(x)$, i.~e. the mapping $f$ has a  $\overline{\sigma}$-multiple fixed point, which is the limit of sequence $\{x^n = f(x^{n-1})\}$. 

It is unique inside the set of points $x\in X^m$ that are $P$-comparable with $x^0$. In the case, when $X^m$ satisfies the upper Reisz property, the fixed point is unique in the whole space $X^m$.
\end{theorem}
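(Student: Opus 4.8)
The plan is to deduce the statement from Theorem~\ref{th::monotoneMappingContraction}, applied to the product object $(X^m,\preccurlyeq_{X,P},d^m_{X^m,M^m})$ and the mapping $g:=\overline{\sigma}f\colon X^m\to X^m$. A $\overline{\sigma}$-multiple fixed point of $f$ is, by definition, exactly a fixed point of $g$; so once $g$ is shown to meet the hypotheses of Theorem~\ref{th::monotoneMappingContraction} in $(X^m,\preccurlyeq_{X,P},d^m_{X^m,M^m})$, the existence assertion, the iterative description of the fixed point, and the two uniqueness statements all transfer verbatim.

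First I would check the structural assumptions on $(X^m,\preccurlyeq_{X,P},d^m_{X^m,M^m})$. With the coordinate-wise operation and order, $M^m$ is a partially ordered monoid with $\theta_{M^m}=(\theta_M,\ldots,\theta_M)$ and $M^m_+=M_+^m\neq\{\theta_{M^m}\}$, and the coordinate-wise family $N(M^m)$ inherits properties (1)--(5) of Definition~\ref{def::nullSequences} from $N(M)$. Since $d^m_{X^m,M^m}$ is coordinate-wise and $d_{X,M}$ is symmetric and vanishes only on the diagonal, $d^m_{X^m,M^m}$ is an $M^m$-valued distance; the strong Fr\'{e}chet--Wilson property for $d^m_{X^m,M^m}$ follows from the strong Fr\'{e}chet--Wilson property of $d_{X,M}$ applied in each coordinate together with the coordinate-wise description of $N(M^m)$, so $(X^m,d^m_{X^m,M^m})$ is an $FM$-space. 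Moreover $\preccurlyeq_{X,P}$ is a partial order on $X^m$ (the product order, with the coordinates outside $P$ reversed), and, as noted before the theorem, $N(M)$-completeness of $(X,d_{X,M})$ yields $N(M^m)$-completeness of the product, while regularity of the $N(M)$-convergence yields regularity---in particular upper regularity with respect to $\preccurlyeq_{X,P}$---of the $N(M^m)$-convergence; here one must observe that for $i\notin P$ a $\preccurlyeq_{X,P}$-increasing sequence is decreasing in the $i$-th copy of $X$, which is exactly why full regularity of the $N(M)$-convergence is assumed.

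Next I would match the remaining hypotheses: $P$-monotonicity of $\overline{\sigma}f$ says precisely that $g$ is non-decreasing for $\preccurlyeq_{X,P}$; the assumed inequality $d^m_{X^m,M^m}(\overline{\sigma}f(x),\overline{\sigma}f(y))\le\lambda(d^m_{X^m,M^m}(x,y))$ for $x\preccurlyeq_{X,P}y$ is condition~\eqref{strongContraction} for $g$ with $\lambda\in\mathcal{L}(N(M^m))$; and the hypothesis $x^0\preccurlyeq_{X,P}(\overline{\sigma}f)(x^0)$ is the required $x^0\le g(x^0)$. Theorem~\ref{th::monotoneMappingContraction} then gives $x\in X^m$ with $x=g(x)=\overline{\sigma}f(x)$, realized as the limit of the iterates $x^n=g(x^{n-1})$, unique among points $\preccurlyeq_{X,P}$-comparable to $x^0$, and, when $X^m$ has the upper Riesz property, unique in all of $X^m$. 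Reading this back through the definition of a $\overline{\sigma}$-multiple fixed point gives the theorem.

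The substantive content is entirely in the second step---verifying that $d^m_{X^m,M^m}$ is again an $FM$-distance and that completeness and regularity survive the passage to the product under the mixed order $\preccurlyeq_{X,P}$. I expect the main obstacle to be the bookkeeping with this mixed order, specifically confirming that upper regularity of $N(M^m)$-convergence with respect to $\preccurlyeq_{X,P}$ follows from full (not merely upper) regularity of the $N(M)$-convergence, since along the coordinates outside $P$ monotonicity is reversed. Once these coordinate-wise unwindings are in place, the conclusion is immediate.
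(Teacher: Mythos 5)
Your proposal is correct and follows the paper's own route: the paper likewise obtains the theorem as a direct corollary of Theorem~\ref{th::monotoneMappingContraction} applied to $\overline{\sigma}f$ on the product space $(X^m,\preccurlyeq_{X,P},d^m_{X^m,M^m})$, using the observations that $N(M)$-completeness and regularity of the $N(M)$-convergence pass to the coordinate-wise structure on $X^m$. Your explicit remark that full (not merely upper) regularity is what makes $\preccurlyeq_{X,P}$-upper regularity work for coordinates outside $P$ is exactly the point the paper leaves implicit, so you have only added useful detail, not changed the argument.
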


\begin{remark}
Instead of the distance $d^m_{X^m,M^m}$ one can consider the distances $d^\Sigma_{X^m,M}$ or (if $M$ satisfies the upper Riesz property and from  $\{x^n\}, \{y^n\}\in N(M)$ it follows $\{x^n\vee y^n\}\in N(M)$) $d_{X^m,M}^\vee$ (see Example~4 in Section~\ref{s::examples}). 
    
 It is easy to see that analogues of Theorem~\ref{th::multipleFixedPoint} hold for the spaces 
 $$(X^m,\preccurlyeq_{X,P},d^\Sigma_{X^m,M})\text{ and }(X^m,\preccurlyeq_{X,P},d^\vee_{X^m,M}).$$
\end{remark}

\section{Applications}\label{s::applications}

 \subsection{ Uniform space as a partial case of an M-metric space.}  We use the terminology from~\cite[Chapter~6]{Kelley}, see also~\cite[Chapter~8]{Engelking}.
  Let $X$ be a set and $\Phi$ be a separating uniform structure on it.
  Denote by $M$ the set of all 
  subsets of $X$ that contain the set $\Delta(X):=\{(x,x)\colon x\in X\}$. It becomes a partially ordered monoid, if we set  $\theta_M=\Delta(X)$,
  {
  \begin{multline*}
       A + B = A\circ B:=\{(x,y)\in X\times X\colon \text{there exists } z\in X \text{ such that } 
       \\
       (x,z)\in A \text{ and } (z,y)\in B\},   
  \end{multline*}
  }
   and define the order by inclusion. Obviously, $M_+ = M$.
  
  Denote by $\EE$ a symmetric base of entourages of the uniformity $\Phi$ and set 
  $$
    d_{X,M}(x,y) = \bigcap\left\{\varepsilon\in \EE\colon (x,y)\in \varepsilon\right\}.
  $$
  Next we show that $(X, d_{X,M})$ is an $M$-metric space (see Example~1 in Section~\ref{s::examples} for the definition).
  
  If $x = y$, then $(x,y) \in \varepsilon$ for all $\varepsilon\in \EE$ and hence $d_{X,M}(x,y) = \Delta(X)$ due to the separating axiom. Conversely, if $d_{X,M}(x,y) = \Delta(X)$, then $(x,y)\in \Delta(X)$ and hence $x = y$.
  
  From the definition of the function $d_{X,M}$, due to symmetricity of entourages from $\EE$, it follows that $d_{X,M}(x,y) = d_{X,M}(y,x)$.
  
  Finally, let $x,y,z\in X$. Then $(x,z)\in d_{X,M}(x,y)\circ d_{X,M}(y,z)$, and hence $d_{X,M}(x,z)\leq d_{X,M}(x,y) + d_{X,M}(y,z).$
  
  The family $\EE$ satisfies all properties of Definition~\ref{def::solidM+}, and hence the convergence in the space $(X,d_{X,M})$ coincides with the convergence in the topology induced by the uniform structure.

\subsection{Equations of the form $x=\Lambda(f,x)$}
Let  $X$ be an $FM$-distance space, $Y$ be a set. Consider an operator $\Lambda\colon Y\times X\to X$ and assuming that $z\in Y$ is known, we are interested in the question of existence and uniqueness of a  solution of the equation 
\begin{equation}\label{operatorEquation}
    x=\Lambda(z,x).
\end{equation}

Assume that there exists a non-decreasing mapping $\lambda\colon M_+\to M_+$ such that for all  $z\in Y$ and $x,y\in X$
$$
d_{X,M}(\Lambda (z,x),\Lambda (z,y))\leq \lambda(d_{X,M}(x,y))
$$
and the series 
\begin{equation}\label{**}
\sum_{n=1}^\infty \lambda^n(d_{X,M}(x,y))
\end{equation}
is a Cauchy series.
Applying Theorem~\ref{th::generalContraction} to the function $x\to \Lambda (z,x)$, we obtain that it has a unique fixed point, which is the limit of the sequence 
$$
x_0, x_1=\Lambda (z,x_0),\ldots, x_n=\Lambda (z,x_{n-1}), \ldots
$$
where $x_0$ is arbitrary point from  $X$. Hence equation~\eqref{operatorEquation} has a unique solution.

As an illustration, consider the case, when $M=\RR$,  instead of $X$ we consider the Cartesian product $(X^2,d_{X^2,\RR^2})$ of $FM$-spaces (see the example in Subsection~\ref{s::examples}) and assume that  $\lambda = \lambda_z$ is given by a matrix
$$
\begin{pmatrix}
a_z & b_z\\
c_z& d_z
\end{pmatrix}.
$$
Then series~\eqref{**} is a Cauchy series, provided  $\rho(\lambda_z)<1$, where $\rho(\lambda_z)$  is the spectral radius of the operator $\lambda_z$. The condition  $\rho(\lambda_z)<1$ is equivalent (see~\cite[\S 7.1]{Zabrejko}) to the following numeric inequalities:
$$
a_z<1, d_z<1,\text{ and } b_zc_z<(1-a_z)(1-d_z),
$$
which allows $b_z$ to be large.

\subsection{Fredholm integral equations.}\label{s::fredholmEquation}
Let $T$ be a metric compact and $\mu$ be a measure defined on the $\sigma$-algebra of Borel sets of $T$. Let also {an} $L$-space {(the definition can be found in~\cite{Vahrameev, Babenko19, Babenko20,Babenko21})}  $(X,h_X)$ be given, and $C(T,X)$ be the space of continuous functions $x\colon T\to X$. In the space $C(T,X)$ consider a $C(T,\mathbb{R})$-valued metric, setting
$$
h_{C(T,X)}(x,y)=h_X(x(\cdot),y(\cdot)),
$$
where $C(T,\RR)$ is considered as a partially ordered monoid with pointwise addition and partial order.
It is easy to see that the obtained space $C(T,X)$ is complete whenever $X$ is complete.

We are interested, whether a solution of the equation
$$
x(t)=f(t)+\int_Tg(t,s,x(s))d\mu(s)
$$
exists and is unique, where
$g\in C(T\times T\times X, X)$ and $f\in C(T,X)$ are given and $x$ is to be found.

Assume that for arbitrary $x,y\in C(T,X)$ and $t,s\in T$, 
$$
h_{X}(g(t,s,x(s)),g(t,s,y(s)))\le Q(t,s)h_X(x(s),y(s)),
$$
where $Q\in C(T\times T,\mathbb{R}_+)$. If, for example, $g(t,s,x(s))=K(t,s)x(s)$, where $K\in C(T\times T,\mathbb{R}_+)$, i.~e. the considered equation is linear with non-negative kernel, then $Q(t,s)=K(t,s)$. 

For the operator $A\colon C(T,X)\to C(T,X)$
$$
Ax(\cdot)=f(\cdot)+\int_Tg(\cdot,s,x(s))d\mu(s)
$$
we have
$$
h_{C(T,X)}(Ax,Ay)\le \lambda(h_{C(T,X)}(x,y)),
$$
where $\lambda\colon C(T,\RR_+)\to C(T,\RR_+)$ is the linear integral operator with the kernel $Q(t,s)$:
$$
\lambda(x(\cdot))(t)=\int_TQ(t,s)x(s)d\mu(s).
$$
For arbitrary $n\in\mathbb{N}$
$$
\lambda^n(x(\cdot))(t)=\int_TQ_n(t,s)x(s)d\mu(s),
$$
where $Q_1(t,s)=Q(t,s)$ and $Q_n(t,s)=\int_TQ_{n-1}(t,u)Q_1(u,s)d\mu(u)$.

Theorem~\ref{th::generalContraction} is applicable to the operator $A$ if for each function $x\in C(T,X_+)$ the series
$$
\sum_{n=1}^\infty \int_TQ_n(t,s)x(s)d\mu(s)
$$
converges in the space $(C(T,X),h_{C(T,X)})$.
It is easy to see, that it is sufficient to require that the series
$$
\sum_{n=1}^\infty \int_TQ_n(t,s)d\mu(s)
$$
converges in the space $(C(T,X),h_{C(T,X)})$. Thus, if the latter series converges, then the considered Fredholm equation has a unique solution in the space $C(T,X)$. It is easy to see that this condition is significantly more general than the one from~\cite{Babenko19}.

\appendix
\section{Proofs of Lemmas~\ref{l::eCauchySeries} and~\ref{l::eFMCondition}. }\label{a::eNullSeqFamily}

\subsection{Proof of Lemma~\ref{l::eCauchySeries}}
\begin{proof}
Assume that $\sum_{n=1}^\infty x_n$ is a Cauchy series but there exists $\varepsilon\in\EE$ such that for each $N\in\NN$ there exist $m\geq n\geq N$ such that   inequality~\eqref{eFundamelity} does not hold. Then one can build increasing sequences of natural numbers $\{m_k\}$ and $\{n_k\}$, $n_k\leq m_k$ for all $k\in \NN$ such that the inequality  $ \sum_{s=n_k}^{m_k} x_s < \varepsilon$ does not hold for all $k\in\NN$. However, this contradicts to the assumption 
\begin{equation}\label{generalFundamelity}
 \left\{\sum_{s=n_k}^{m_k} x_s\right\}\in N_{\EE}(M).   
\end{equation}

If inequality~\eqref{eFundamelity} holds, then for arbitrary $\varepsilon\in \EE$ and arbitrary increasing sequences of natural numbers $\{m_k\}$, $\{n_k\}$ one has $\sum_{s=n_k}^{m_k} x_s<\varepsilon$ whenever $k$ is such that  $n_k>N$, i.~e.~\eqref{generalFundamelity} holds.
 \end{proof}
 
 \subsection{Proof of Lemma~\ref{l::eFMCondition}}
  \begin{proof}
 Let  a sequence $\{x_n\}\subset X$, increasing sequences of natural numbers $\{n_k\}$ and $\{m_k\}$ such that $m_k> n_k$ for all $k\in\NN$, and  
$$
\left\{\sum_{s = n_k}^{m_k-1} d_{X,M}(x_s,x_{s+1})\right\}\in N_{\EE}(M)
$$
be given. For each $\varepsilon\in \EE$ choose $\delta\in\EE$ according to the formulated condition, and for the found $\delta$ there exists $N\in\NN$ such that for all $k\geq N$ one has $\sum_{s = n_k}^{m_k-1} d_{X,M}(x_s,x_{s+1})<\delta$. Then for all $k\geq N$, $d_{X,M}(x_{n_k},x_{m_k})<\varepsilon$ and hence
$$
\left\{ d_{X,M}(x_{n_k},x_{m_k})\right\}\in N(M),
$$
thus $(X,d_{X,M})$ is an $FM$-space and sufficiency is proved.

Assume that for some $\varepsilon\in\EE$ and arbitrary $\delta\in \EE$ there exists $n_\delta\in\NN$ and $x_1^\delta,\ldots, x_{n_\delta}^\delta\in X$ such that 
$$
\sum\limits_{k=1}^{n_\delta-1} d_{X,M}(x_k^\delta,x_{k+1}^\delta)< \delta,
$$
but the inequality 
\begin{equation}\label{notInN(M)}
    d_{X,M}(x_1^\delta, x_{n_\delta}^\delta)<\varepsilon
\end{equation}
{does not hold.}
 Consider a sequence $\{\delta_k\}\in N_{\EE}(M)$, $\delta_k\subset \EE$, and the sequence 
$$\{y_s\} 
= \left\{
x_1^{\delta_1},\ldots, x_{n_{\delta_1}}^{\delta_1},
x_1^{\delta_2},\ldots, x_{n_{\delta_2}}^{\delta_2},
\ldots
\right\}.
$$
Applying the strong Fr\'{e}chet-Wilson property to the sequences $u_1 = 1$, $u_k= u_{k-1} + n_{\delta_{k-1}}$, and $v_1 = n_{\delta_1}$, $v_k = v_{k-1} + n_{\delta_{k-1}}$, $k\geq 2$, we obtain that 
$$
\left\{d_{X,M}(y_{u_k}, y_{v_k}) \right\}
=\left\{d_{X,M}\left(x_1^{\delta_k}, x_{n_{\delta_k}}^{\delta_k}\right) \right\}
\in N_{\EE}(M),
$$
which contradicts to {the fact that inequality~\eqref{notInN(M)} does not hold}.
 \end{proof}

\bibliographystyle{plain}
\bibliography{bibliography}

\end{document}